\newtheorem{theorem}{Theorem}[section]
\newtheorem{corollary}{Corollary}[section]
\newtheorem{lemma}{Lemma}[section]
\newtheorem{thmx}{Theorem}[section]
\newtheorem{remark}{Remark}[section]
\numberwithin{equation}{section}
\title[Zeros of certain composite polynomials]{On the zeros of certain composite polynomials and an operator preserving inequalities}
\author{N. A. Rather$^1$}
\author{Ishfaq Dar$^2$}
\address{$^{1,2}$Department of Mathematics, University of Kashmir, Srinagar-190006, India}
\email{dr.narather@gmail.com, ishfaq619@gmail.com}
\author{Suhail Gulzar$^3$}
\address{$^3$Department of Mathematics, Govt. Degree College, Anantnag-192101, India}
\email{sgmattoo@gmail.com}
\begin{document}
\maketitle
\footnotetext{\textbf{AMS Mathematics Subject Classification(2010)}: 26D10, 41A17.}
\footnotetext{\textbf{Keywords}: Polynomials, Operators, Inequalities in the complex domain.  }
\begin{abstract}
If all the zeros of $n$th degree polynomials $f(z)$ and $g(z) = \sum_{k=0}^{n}\lambda_k\binom{n}{k}z^k$ respectively lie in the cricular regions $|z|\leq r$ and $|z| \leq s|z-\sigma|$, $s>0$, then it was proved by Marden \cite[p. 86]{mm} that all the zeros of the polynomial $h(z)= \sum_{k=0}^{n}\lambda_k f^{(k)}(z) \frac{(\sigma z)^k}{k!}$ lie in the circle $|z| \leq r ~ \max(1,s)$. In this paper, we relax the condition that $f(z)$ and $g(z)$ are of the same degree and instead
assume that $f(z)$ and $g(z)$ are polynomials of arbitrary
degree $n$ and $m$ respectively, $m\leq n,$  and obtain a generalization of this result.  As an application, we also introduce a linear operator which preserve Bernstein type polynomial inequalities.   \end{abstract}
\section{\textbf{Introduction and statement of results}}
Polynomials play an important role in many scientific disciplines and location of their zeros in particular have important applications in many areas of applied mathematics. The fundamental results concerning the relative location of the zeros of certain composite polynomials can be found
 in the comprehensive book by Marden \cite{mm} (see also \cite{rs}). Many results pertaining to the distribution of zeros of some composite polynomials can also be found in \cite[see chapter 2 of part V]{ps}. The following result concerning the comparative position of the zeros of a polynomial which is derived by the 'composition'  of two polynomials is due to Marden \cite[p. 86]{mm}.
\begin{thmx}
If all the zeros of an $n$th degree polynomial $f(z)$ lie in the circle $|z|\leq r$ and if all the zeros of the polynomial 
\begin{align*}
g(z)=\lambda_0+\binom{n}{1}\lambda_1z+\ldots+\binom{n}{n}\lambda_nz^n
\end{align*} 
lie in the circular region: 
\begin{align*}
|z|\leq s|z-\sigma|, ~~~ s>0,
\end{align*}
then all the zeros of the polynomial 
\begin{align*}
h(z)=\lambda_0f(z)+\lambda_1f^{\prime}(z)\dfrac{(\sigma z)}{1!}+\ldots+\lambda_n f^{(n)}(z)\dfrac{(\sigma z)^n}{n!}
\end{align*}
lie in the circle $|z|\leq r \max (1,s).$
\end{thmx}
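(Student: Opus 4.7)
The natural tool for a result of this shape is Grace's apolarity theorem, and the proof I would attempt is by contradiction. Suppose that $z_0\ne 0$ is a zero of $h$ with $|z_0|>r\max(1,s)$; I shall derive an impossibility.

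The first step is to transfer both hypotheses into polynomials in an auxiliary variable $w$. On the side of $f$ I would set $P(w):=f(z_0+w)$, whose zeros are $\{z_i-z_0\}$ and hence all lie in the closed disk $D_1:=\{w:|w+z_0|\le r\}$. On the side of $g$ I would form the companion polynomial
\[Q(w):=w^{n}\,g\!\left(-\frac{\sigma z_0}{w}\right)=\sum_{k=0}^{n}\binom{n}{k}\lambda_{k}(-\sigma z_0)^{k}w^{n-k}.\]
A short direct calculation should then show that, after writing $P$ as $\sum\binom{n}{k}\tilde a_k w^k$ and $Q$ as $\sum\binom{n}{k}\tilde b_k w^k$, the condition $h(z_0)=0$, namely $\sum_k\lambda_k f^{(k)}(z_0)(\sigma z_0)^{k}/k!=0$, coincides with the Grace apolarity relation $\sum_k(-1)^{k}\binom{n}{k}\tilde a_k\tilde b_{n-k}=0$.

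The next step is to identify the region containing the zeros of $Q$. Each zero $\zeta$ of $g$ produces the zero $w=-\sigma z_0/\zeta$ of $Q$, so substituting $\zeta=-\sigma z_0/w$ into $|\zeta|\le s|\zeta-\sigma|$ and cancelling the factor $|\sigma|/|w|$ gives $|w+z_0|\ge|z_0|/s$; thus every zero of $Q$ lies in the closed circular region $R_Q:=\{w:|w+z_0|\ge|z_0|/s\}$. Since $P$ and $Q$ are apolar polynomials of degree $n$ and $D_1$ is a circular region containing all zeros of $P$, Grace's theorem forces $D_1$ to contain at least one zero of $Q$. Such a zero $w^\ast$ satisfies both $|w^\ast+z_0|\le r$ and $|w^\ast+z_0|\ge |z_0|/s$, whence $|z_0|\le rs\le r\max(1,s)$, contradicting the standing assumption.

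The step I expect to be the most delicate is the verification that $h(z_0)=0$ really matches the Grace apolarity relation for $P$ and $Q$: the binomial normalisations, the reindexing $k\mapsto n-k$, and the alternating sign $(-1)^k$ all have to line up in precisely the right way, and any reader will want to see this checked in detail. A related (minor) issue is the handling of degenerate configurations---$z_0=0$, $\sigma=0$, or $\lambda_0=0$---where either the conclusion is immediate or the degree of $Q$ drops and Grace's theorem must be invoked in a limiting form.
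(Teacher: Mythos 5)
Your argument is correct, and I verified the delicate step you flagged: with $\tilde a_k=f^{(k)}(z_0)/\bigl(k!\binom{n}{k}\bigr)$ and $\tilde b_{n-k}=\lambda_k(-\sigma z_0)^k$ one gets $\sum_k(-1)^k\binom{n}{k}\tilde a_k\tilde b_{n-k}=\sum_k\lambda_kf^{(k)}(z_0)(\sigma z_0)^k/k!=h(z_0)$, and the substitution $\zeta=-\sigma z_0/w$ does carry $|\zeta|\le s|\zeta-\sigma|$ to $|w+z_0|\ge|z_0|/s$. But your route is genuinely different from the paper's. The paper (in the proof of Theorem \ref{th1}, which contains this statement as the case $m=n$) inverts nothing: it observes that $h(w)=0$ is a symmetric multi-affine function of the zeros of $f$ and applies Walsh's coincidence theorem (Lemma \ref{lm1}) to replace $f$ by $(z-\alpha)^n$ with $|\alpha|\le r$; the equation then factors as $(w-\alpha)^n\,g\bigl(\sigma w/(w-\alpha)\bigr)=0$, giving $w=\alpha$ or $w=\alpha\beta/(\beta-\sigma)$ for some zero $\beta$ of $g$, hence $|w|\le r\max(1,s)$ directly and constructively, with no contradiction argument. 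The two tools are essentially equivalent (Grace's theorem is standardly deduced from Walsh's coincidence theorem by exactly the polarization you perform), but the coincidence version is shorter and passes without change to the case $\deg g=m<n$, which is the point of the paper's generalization. Your apolar pair also survives that generalization (set $\lambda_k=0$ for $k>m$), with one small patch to your region claim: $Q$ then acquires a zero at $w=0$ of multiplicity $n-m$, which need \emph{not} lie in $R_Q$ when $s<1$; it does, however, lie outside $D_1$ because $|z_0|>r\max(1,s)\ge r$, so the contradiction via Grace's theorem is unaffected. The other degeneracies you list are indeed harmless: $z_0=0$ can never violate the conclusion since $r\max(1,s)\ge0$; $\sigma=0$ reduces $h$ to $\lambda_0f$; and $\lambda_0=0$ only pushes a zero of $Q$ to $\infty$, which is not in the bounded disk $D_1$, so Grace's theorem (applied with the polynomial of exact degree $n$, namely $P$, having all its zeros in the circular region) still produces a finite zero of $Q$ in $D_1$.
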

Here we propose to relax the condition that the polynomials $f(z)$ and $g(z)$ are of the same degree and prove:
\begin{theorem}\label{th1}
If all the zeros of polynomial $f(z)$ of degree $n$ lie in $|z|\leq r$ and if all the zeros of the polynomial
\begin{align*}
g(z)=\lambda_0+\binom{n}{1}\lambda_1z+\ldots+\binom{n}{m}\lambda_mz^m
\end{align*}
lie in $|z| \leq s|z-\sigma|$, $s>0$, then the polynomial 
\begin{align*}
h(z)=\lambda_0f(z)+\lambda_1f^{\prime}(z)\dfrac{(\sigma z)}{1!}+\ldots+\lambda_m f^{(m)}(z)\dfrac{(\sigma z)^m}{m!}
\end{align*}
 has all its zeros in $|z| \leq r \max(1,s)$
\end{theorem}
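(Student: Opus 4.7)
My plan is to reduce Theorem \ref{th1} to Marden's result (Theorem A) by perturbing $g$ into a polynomial of exact degree $n$ whose $m+1$ lowest coefficients coincide with those of $g$, so that the associated composite polynomial is an arbitrarily small perturbation of $h$.

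First I would argue that it suffices to treat the case $s\ge 1$ with $\sigma\ne 0$. The regions $R_s:=\{z:|z|\le s|z-\sigma|\}$ are monotone nondecreasing in $s$, so if $s<1$ the zeros of $g$ already lie in $R_1$, and the desired bound $|z|\le r=r\max(1,s)$ coincides with the bound for $s=1$. The case $\sigma=0$ is immediate because then $h(z)=\lambda_0 f(z)$. For $\sigma\ne 0$ one has $\sigma\notin R_s$, and a short coefficient calculation gives $[z^n]h(z)=a_n g(\sigma)$ (with $a_n$ the leading coefficient of $f$), so that $h$ is of exact degree $n$.

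Next, for $\epsilon>0$ I set $g_\epsilon(z):=g(z)+\epsilon z^n$, a polynomial of degree exactly $n$ sharing the coefficients of $g$ at $z^0,\ldots,z^m$; its associated composite is $h_\epsilon(z)=h(z)+\epsilon\,a_n(\sigma z)^n$. Fix any $s'>\max(s,1)$. A direct Apollonius-circle computation shows the strict interior inclusion $R_s\subset R_{s'}^{\circ}$. For all sufficiently small $\epsilon$: (i) the $m$ zeros of $g_\epsilon$ that perturb the zeros of $g$ remain in $R_{s'}$, by continuity of roots as functions of coefficients; (ii) the remaining $n-m$ zeros of $g_\epsilon$, whose moduli grow like $|\epsilon|^{-1/(n-m)}$, ultimately lie in $R_{s'}$ because $s'>1$ forces $R_{s'}$ to contain a neighbourhood of infinity. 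Theorem A, applied to $f$ and $g_\epsilon$ (both of exact degree $n$), then yields that every zero of $h_\epsilon$ lies in $|z|\le r s'$.

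Finally, letting $\epsilon\to 0$, both $h_\epsilon$ and $h$ are of exact degree $n$ with coefficients of $h_\epsilon$ converging to those of $h$, so by continuity of roots the zeros of $h$ all lie in the closed disk $|z|\le r s'$; letting $s'\downarrow \max(s,1)$ gives the claimed bound $|z|\le r\max(1,s)$. The main technical obstacle is step (i) above: guaranteeing that the $m$ perturbed roots of $g_\epsilon$ do not escape $R_{s'}$. This hinges on the strict interior inclusion $R_s\subset R_{s'}^{\circ}$ for $s<s'$ (both $\ge 1$), which follows from the coaxial (Apollonius) nesting of the regions $R_s$.
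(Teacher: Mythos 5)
Your proposal is correct, but it takes a genuinely different route from the paper. The paper does not go through Marden's Theorem A at all: it fixes an arbitrary zero $w$ of $h$, observes that the relation $\sum_{k=0}^{m}\lambda_k f^{(k)}(w)\frac{(\sigma w)^k}{k!}=0$ is a symmetric multilinear form in the $n$ zeros of $f$, and invokes Aziz's coincidence lemma (Lemma \ref{lm1}) to replace $f$ by $(z-\alpha)^n$ for some $\alpha$ with $|\alpha|\le r$; the equation then factors as $(w-\alpha)^n g\bigl(\tfrac{\sigma w}{w-\alpha}\bigr)=0$, giving $w=\alpha$ or $w=\frac{\alpha\beta}{\beta-\sigma}$ for a zero $\beta$ of $g$, from which the bound follows directly. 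You instead leave $f$ alone and raise the degree of $g$ by setting $g_\epsilon=g+\epsilon z^n$, check that the extra $n-\deg g$ roots escape to infinity into $R_{s'}$ while the perturbed roots stay in $R_{s'}$ (using the open inclusion $R_s\subseteq\{|w|<s'|w-\sigma|\}$, which indeed holds by the trivial inequality $|w|\le s|w-\sigma|<s'|w-\sigma|$, no Apollonius computation needed), apply Theorem A, and pass to the limit in $\epsilon$ and then in $s'$. All the supporting facts you cite check out: $[z^n]h=a_ng(\sigma)\ne 0$ for $\sigma\ne 0$ since $\sigma\notin R_s$, $h_\epsilon=h+\epsilon a_n(\sigma z)^n$, and $R_{s'}$ contains a neighbourhood of infinity precisely because $s'>1$. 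The only cosmetic imprecision is that $g$ may have degree $d<m$ if $\lambda_m=0$, so the split is into $d$ perturbed roots and $n-d$ escaping roots rather than $m$ and $n-m$; this does not affect the argument. What each approach buys: the paper's coincidence-lemma proof is a direct, limit-free argument that exhibits the zeros of $h$ explicitly in terms of $\alpha$ and $\beta$ and generalizes the classical proof pattern; your perturbation argument shows the stronger meta-fact that the unequal-degree version is already a formal consequence of the equal-degree Theorem A, at the cost of a continuity-of-roots limiting argument and the mild geometric analysis of the regions $R_s$.
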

Let $\mathcal{P}_n$ be the class of all polynomials of degree at most $n,$ then for $P\in\mathcal{P}_n$ 
\begin{align}\label{a1}
|P(z)| \leq |z^n|M \qquad \text{for} \quad |z| = 1
\end{align}
where $M = \max_{|z|=1}|P(z)|.$ According to Bernstein's inequality \cite{BNS} concerning the estimate of $|P^{\prime}(z)|$ on a unit disk, we have
\begin{align}\label{a2}
\max_{|z|= 1}\bigg|\frac{d}{dz}(P(z))\bigg|\leq \bigg|\frac{d}{dz}(z^n) \bigg|M \qquad \text{for} \quad |z|=1.
\end{align} 
This shows that inequality \eqref{a1} is preserved under differentiation for $P\in\mathcal{P}_n$. In view of this observation, it is natural to characterize the operators which preserve Bernstein-type polynomial inequalities. 
As an attempt to this characterization, we consider an operator $N$ which carries $P\in\mathcal{P}_n$ into $N[P]\in\mathcal{P}_n$ defined by
\begin{align*} 
N[P](z):=\sum\limits_{i=0}^{m}\lambda_i\left(\frac{nz}{2}\right)^i\frac{P^{(i)}(z)}{i!},
\end{align*}
where $\lambda_i,$ $ i=0,1,2,...m$ are such that all the zeros of
\begin{align*} 
\phi(z) =  \sum\limits_{i=0}^{m} \binom{n}{i}\lambda_i z^i, \quad m \leq n
\end{align*}
lie in the half plane $|z| \leq |z - \frac{n}{2}|$ and establish certain results concerning the upper-bound of $|N[P]|$ for $|z|\geq 1$. In this direction, we first present the following result:
\begin{theorem}\label{th2}
If $f(z)$ is polynomial of degree $n$ having all its zeros in $|z|\leq 1$ and $P\in\mathcal{P}_n$ such that
\begin{align*}
|P(z)|\leq |f(z)| \qquad \text{for} \quad |z|=1,
\end{align*}
then
\begin{align}\label{a3}
|N[P](z)|\leq |N[f](z)| \qquad for \quad |z|\geq 1.
\end{align}
The result is sharp and equality in \eqref{a3} holds for $P(z)=e^{i \alpha}f(z)$, $\alpha \in \mathbb{R}.$
\end{theorem}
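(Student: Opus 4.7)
The plan is to reduce Theorem \ref{th2} to Theorem \ref{th1} via a classical omitted-value argument. First, I would apply the maximum modulus principle to the rational function $P(z)/f(z)$, which is holomorphic on $|z|\geq 1$ because $f$ has no zeros there, and whose singularities on $|z|=1$ are removable since the boundary inequality $|P|\leq|f|$ forces multiplicity-matching at any boundary zeros of $f$. This promotes the hypothesis to $|P(z)|\leq |f(z)|$ for every $|z|\geq 1$.

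Second, for each $\lambda\in\mathbb{C}$ with $|\lambda|>1$ I would consider the auxiliary polynomial $G_\lambda(z)=P(z)-\lambda f(z)$. The extended bound from the first step forces $G_\lambda$ to have all of its zeros in $|z|\leq 1$, since a zero in $|z|>1$ would give $|P|=|\lambda||f|>|f|$ there, contradicting the extended inequality; a short check on leading coefficients (using $|a_n/b_n|\leq 1$, which drops out of the same maximum modulus argument as $|z|\to\infty$) ensures that $G_\lambda$ has degree exactly $n$. This places $G_\lambda$ in position to invoke Theorem \ref{th1} with $r=1$, $\sigma=n/2$, and $s=1$---the latter because the polynomial $\phi$ attached to $N$ plays the role of $g$ and has all its zeros in $|z|\leq|z-n/2|$. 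The conclusion of Theorem \ref{th1} is that $N[G_\lambda](z)=N[P](z)-\lambda N[f](z)$ has all of its zeros in $|z|\leq r\max(1,s)=1$.

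Finally, a direct application of Theorem \ref{th1} to $f$ itself shows that $N[f](z)\neq 0$ for $|z|>1$, so the quotient $N[P](z)/N[f](z)$ is a well-defined holomorphic function on the exterior of the unit disk. Combined with the previous paragraph, this quotient omits every $\lambda$ with $|\lambda|>1$, hence has modulus at most one throughout $|z|>1$; continuity extends the bound to $|z|\geq 1$. The equality case $P(z)=e^{i\alpha}f(z)$ is immediate from the $\mathbb{C}$-linearity of $N$. The chief technical obstacle I expect is the boundary behavior underlying the first step: one must be careful that boundary zeros of $f$ on $|z|=1$ produce removable singularities of $P/f$ rather than poles, so that the maximum modulus principle is genuinely available on the closed exterior; once this is established, the path through Theorem \ref{th1} and the value-distribution conclusion is essentially mechanical.
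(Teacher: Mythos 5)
Your proposal is correct and follows essentially the same route as the paper: both establish that $P(z)-\gamma f(z)$ has all its zeros in $|z|\leq 1$ for $|\gamma|>1$ (the paper via Rouch\'e's theorem after dividing out the boundary zeros of $f$, you via the maximum modulus principle applied to $P/f$ --- equivalent handling of the same boundary-multiplicity issue), then invoke Theorem \ref{th1} with $\sigma=n/2$, $s=1$ and conclude by the standard omitted-value argument. Your explicit check that $P-\lambda f$ has degree exactly $n$ (so that Theorem \ref{th1} applies with this $n$) is a small point the paper leaves implicit.
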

 Taking $f(z)=M z^n$, where $ M= \max_{|z|=1}|P(z)|, $ in the above theorem, we obtain the following result.
 \begin{corollary}\label{co1}
\ If $P\in\mathcal{P}_n$ and $M= \max_{|z|=1}|P(z)|,$ then 
\begin{align} \label{x7}
|N[P](z)|\leq |N[\psi_n](z)| M\qquad for \quad |z|\geq 1,
\end{align}
where $\psi_n (z)=z^n$.
The result is sharp and equality in \eqref{x7} holds for $P(z)=e^{i \alpha}M z^n$, $\alpha \in \mathbb{R}.$
\end{corollary}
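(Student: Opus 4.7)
The plan is to deduce Corollary \ref{co1} as a direct specialization of Theorem \ref{th2}, with the majorant polynomial chosen to be $f(z) = Mz^n$. This choice is essentially forced: one needs a degree-$n$ polynomial, all of whose zeros lie in $|z|\leq 1$, whose modulus on the unit circle dominates $|P(z)|$, and $Mz^n$ is the canonical such object, being the extremal polynomial for the trivial bound \eqref{a1}.

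First I would dispose of the degenerate case $M=0$, in which $P\equiv 0$ and both sides of \eqref{x7} vanish; hereafter assume $M>0$, so that $f(z)=Mz^n$ has degree exactly $n$. The hypotheses of Theorem \ref{th2} are then immediate: all zeros of $f$ sit at the origin and hence lie in $|z|\leq 1$; and on $|z|=1$ we have $|f(z)|=M\geq |P(z)|$ by the very definition of $M=\max_{|z|=1}|P(z)|$. Theorem \ref{th2} therefore yields $|N[P](z)|\leq |N[f](z)|$ for $|z|\geq 1$.

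It remains to identify $N[f]$ with $M\,N[\psi_n]$, which is immediate from the $\mathbb{C}$-linearity of $N$ (a finite linear combination of the operators $P\mapsto (nz/2)^i P^{(i)}(z)/i!$): one has $N[Mz^n](z)=M\,N[\psi_n](z)$, and substitution produces \eqref{x7}. For sharpness, the choice $P(z)=e^{i\alpha}Mz^n$, again combined with linearity, gives $|N[P](z)|=M|N[\psi_n](z)|$, so equality is attained. There is no real obstacle in this argument beyond verifying the hypotheses of Theorem \ref{th2} and invoking linearity of $N$; the corollary is a genuine specialization rather than an independent result, and one expects the author's proof to be correspondingly brief.
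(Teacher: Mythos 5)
Your proposal is correct and matches the paper's own derivation exactly: the authors obtain the corollary by taking $f(z)=Mz^n$ in Theorem \ref{th2} and using the linearity of $N$. Your extra care with the degenerate case $M=0$ and the explicit verification of the hypotheses are fine but add nothing beyond what the paper implicitly assumes.
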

\begin{remark}\label{rm1}
Setting $\lambda_i =0, ~ i=0,1,2,...(m-1)$ in corollary \eqref{co1}, it follows that if $P\in\mathcal{P}_n$, then 
\begin{align*}
\left|\dfrac{d^{m}}{dz^{m}}(P(z))\right| \leq \left| \dfrac{d^{m}}{dz^{m}}(z^n)  \right|M, \quad for \quad |z|\geq 1,
\end{align*}
which includes inequality \eqref{a2} due to Bernstein as special case.
\end{remark}
For $P\in \mathcal{P}_n$  and not vanishing in $|z|<1,$ Paul  Erd\"{o}s conjectured that the inequality  \eqref{a2} can be replaced by 
\begin{align}\label{el}
\max_{|z|= 1}\bigg|\frac{d}{dz}(P(z))\bigg|\leq \dfrac{1}{2}\bigg|\frac{d}{dz}(z^n) \bigg|M \qquad \text{for} \quad |z|=1.
\end{align}
This result was later proved by P.D. Lax \cite{EL}.

 Next we present the following result for the class of polynomials having no zero inside the unit circle $|z|=1.$
 \begin{theorem}\label{th3}
 If $P\in\mathcal{P}_n$ and $P(z)\neq 0$ in $|z|<1$, then
\begin{align}\label{a4}
|N[P](z)|\leq \frac{1}{2}\bigg(|N[\psi_n](z)|+|\lambda_0|\bigg)M \qquad for \quad |z|\geq 1, 
\end{align}
where $M=\max_{|z|=1}|P(z)|$ and $\psi_n (z)=z^n.$ The result is best possible and equality in \eqref{a4} holds for $P(z)=az^n+b, |a|=|b|\neq 0$.
 \end{theorem}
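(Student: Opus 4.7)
The proof follows the Lax/Ankeny--Rivlin template, adapted to the operator $N$. Let
\[
Q(z) = z^{n}\,\overline{P(1/\bar z)}.
\]
Since $P$ is of degree $n$ and has no zeros in $|z|<1$, $Q$ is a polynomial of degree $n$ (note $P(0)\neq 0$) whose zeros are the reflections $1/\bar\zeta_{j}$ of the zeros $\zeta_{j}$ of $P$. Hence $Q$ has all its zeros in $|z|\leq 1$, $|P(z)|=|Q(z)|$ on $|z|=1$, and applying the maximum modulus principle to the analytic function $Q/P$ in the closed unit disk (modulus $1$ on the boundary) yields $|P(z)|\leq|Q(z)|$ for $|z|\geq 1$.

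Theorem~\ref{th2}, applied with $f=Q$ and polynomial $P$, then immediately produces
\[
|N[P](z)| \leq |N[Q](z)|, \qquad |z|\geq 1. \qquad (\mathrm{A})
\]
The plan is to pair $(\mathrm{A})$ with the complementary estimate
\[
|N[P](z)|+|N[Q](z)| \leq \bigl(|N[\psi_{n}](z)|+|\lambda_{0}|\bigr)M, \qquad |z|\geq 1, \qquad (\mathrm{B})
\]
and deduce \eqref{a4} by averaging $(\mathrm{A})$ and $(\mathrm{B})$.

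To establish $(\mathrm{B})$ I would argue in the spirit of the proof of Corollary~\ref{co1}. For parameters $\alpha,\beta\in\mathbb{C}$ in a suitable region, consider an auxiliary polynomial of the form
\[
G_{\alpha,\beta}(z) = M\,\psi_{n}(z) + \beta M - \alpha\bigl(P(z) + \mu(\alpha,\beta)\,Q(z)\bigr),
\]
designed so that (i) on $|z|=1$ the dominant term $M\psi_{n}+\beta M$ beats the perturbation by Rouch\'e, forcing all zeros of $G_{\alpha,\beta}$ into $|z|\leq 1$, and (ii) Theorem~\ref{th1} (applied with $r=1$, $\sigma=n/2$, $s=1$, which is legitimate because $\phi$ has all its zeros in $|z|\leq|z-n/2|$) transfers this zero location to
\[
N[G_{\alpha,\beta}](z) = M\bigl(N[\psi_{n}](z)+\beta\lambda_{0}\bigr) - \alpha\bigl(N[P](z) + \mu\,N[Q](z)\bigr).
\]
Once $N[G_{\alpha,\beta}]$ is known to be non-vanishing on $|z|>1$, evaluating at a fixed $|z|\geq 1$ and inverting the relation---i.e., letting $\alpha,\beta,\mu$ run over the admissible region with arguments chosen so that $\alpha N[P](z)$, $\alpha\mu\,N[Q](z)$, and $M(N[\psi_{n}](z)+\beta\lambda_{0})$ are positive real multiples of a common direction, and taking moduli to the boundary of the Rouch\'e region---extracts $(\mathrm{B})$ in the manner by which Corollary~\ref{co1} is extracted from Theorem~\ref{th1}.

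The principal obstacle is pinning down the exact form of the auxiliary polynomial (in particular the choice of $\mu$) so that the Rouch\'e constraint and the alignment step together produce the coefficient $1$ on $|\lambda_{0}|$ in $(\mathrm{B})$, not a smaller constant; this is the step that genuinely uses the hypothesis $P\neq 0$ in $|z|<1$, via the fact that $Q$ has all zeros in $|z|\leq 1$ and therefore may appear on the same footing as $M\psi_{n}$ inside the auxiliary construction. Degenerate cases where $P$ (or $Q$) has zeros on $|z|=1$ are dispatched by the standard perturbation $P(z)\mapsto P(z/(1+\varepsilon))$ followed by $\varepsilon\downarrow 0$. Sharpness is verified on $P(z)=az^{n}+b$ with $|a|=|b|$: since $N[P](z)=a\,N[\psi_{n}](z)+b\lambda_{0}$ and $M=|a|+|b|=2|a|$, aligning the arguments of the two summands gives $|N[P](z)|=|a||N[\psi_{n}](z)|+|b||\lambda_{0}|=\tfrac{M}{2}(|N[\psi_{n}](z)|+|\lambda_{0}|)$, confirming equality in \eqref{a4}.
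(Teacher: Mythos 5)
Your overall architecture coincides with the paper's: inequality $(\mathrm{A})$ is the paper's Lemma \ref{lm3} (which you correctly obtain from Theorem \ref{th2} applied to $f=Q=P^*$, legitimate since $P(0)\neq 0$ forces $\deg Q=n$ and all zeros of $Q$ lie in $|z|\leq 1$), inequality $(\mathrm{B})$ is the paper's Lemma \ref{lm4}, and the final averaging step is identical. The sharpness check is also fine.

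The genuine gap is in $(\mathrm{B})$. What you offer there is not a proof but a template with an undetermined function $\mu(\alpha,\beta)$, and you concede yourself that you cannot pin it down so that the Rouch\'e constraint and the alignment step produce the coefficient $1$ on $|\lambda_0|$. The missing idea is that no new auxiliary construction is needed: one should apply the already-established inequality $(\mathrm{A})$ itself to the polynomial $R(z)=P(z)-\gamma M$, which is nonvanishing in $|z|<1$ for every $|\gamma|>1$ by the maximum modulus principle and Rouch\'e. Its conjugate-reciprocal is $R^*(z)=P^*(z)-\overline{\gamma}Mz^n$, so linearity of $N$ gives
\begin{align*}
|N[P](z)-\gamma M\lambda_0|\leq |N[P^*](z)-\overline{\gamma}M\,N[\psi_n](z)|,\qquad |z|\geq 1.
\end{align*}
Since Corollary \ref{co1} guarantees $|N[P^*](z)|\leq M|N[\psi_n](z)|$ for $|z|\geq 1$, the argument of $\gamma$ can be chosen to make the right-hand side equal to $M|\gamma||N[\psi_n](z)|-|N[P^*](z)|$; bounding the left-hand side below by $|N[P](z)|-M|\gamma||\lambda_0|$ and letting $|\gamma|\to 1$ yields $(\mathrm{B})$ exactly. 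This is where the hypothesis $P\neq 0$ in $|z|<1$ is \emph{not} needed --- $(\mathrm{B})$ holds for all $P\in\mathcal{P}_n$ --- whereas your sketch misattributes the role of that hypothesis to $(\mathrm{B})$ rather than to $(\mathrm{A})$, which is the only place it is used. As written, your argument for $(\mathrm{B})$ would not compile into a proof without this replacement.
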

\begin{remark}\label{rm2}
\textnormal{
Similarly as in the case of Remark \ref{rm1}, Theorem \ref{th2} implies that if $P\in\mathcal{P}_n$ and $P(z)\neq 0$ in $|z|<1$, then 
\begin{align*}
\left|\dfrac{d^{m}}{dz^{m}}(P(z))\right| \leq \dfrac{1}{2}\left| \dfrac{d^{m}}{dz^{m}}(z^n)  \right|M, \quad \text{for} \quad |z|\geq 1,
\end{align*}
which includes inequality \eqref{el} as a special case and the case $N[P](z)=\lambda_0 P(z)$ yields the following inequality by \cite{AR} for the rate of growth of a polynomial with restricted zeros
\begin{align*}
|P(z)|\leq\frac{1}{2}\bigg(|z^n|+1\bigg)\max_{|z|=1}|P(z)| \qquad |z|\geq 1.
\end{align*}
}
\end{remark}
A polynomial $P\in\mathcal{P}_n$ is said to be self-inversive polynomial if $P(z) = P^*(z)$, where $P^*(z)=z^n \overline{P\big({1}/{\overline{z}}\big)}$. It is known \cite{SIP} that the inequality \eqref{el} also holds if $P\in\mathcal{P}_n$ and is  self-inversive polynomial.
Finally we prove the following result for self-inversive polynomials.
\begin{theorem}\label{th4}
 If $P\in\mathcal{P}_n$ is a self-inversive polynomial, then
\begin{align}\label{e6}
|N[P](z)|\leq \frac{1}{2}\bigg(|N[\psi_n](z)|+|\lambda_0|\bigg) M \qquad |z|\geq 1,
\end{align}
where $M=\max_{|z|=1}|P(z)|$ and $\psi_n(z)=z^n$. Equality in \eqref{e6} holds for $P(z)=z^n+1.$
\end{theorem}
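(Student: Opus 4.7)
The plan is to deduce Theorem \ref{th4} from the following auxiliary inequality, which I claim holds for every $P\in\mathcal{P}_n$ with $M=\max_{|z|=1}|P(z)|$, where $P^*(z):=z^n\overline{P(1/\bar z)}$:
\begin{align*}
|N[P](z)|+|N[P^*](z)|\le\bigl(|N[\psi_n](z)|+|\lambda_0|\bigr)M,\qquad |z|\ge 1.
\end{align*}
Since a self-inversive polynomial satisfies $P\equiv P^*$, linearity of $N$ gives $N[P]\equiv N[P^*]$, and the auxiliary inequality immediately collapses to $2|N[P](z)|\le(|N[\psi_n](z)|+|\lambda_0|)M$, which is exactly \eqref{e6}. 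This same auxiliary inequality underlies Theorem \ref{th3}, where the non-vanishing hypothesis is used (via Theorem \ref{th2} applied to $P^*$) to secure $|N[P](z)|\le|N[P^*](z)|$ on $|z|\ge 1$.

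To prove the auxiliary inequality I would mimic the contradiction argument that underlies Theorem \ref{th2}. Using the identities $N[\psi_n](z)=\phi(n/2)z^n$ and $N[1]=\lambda_0$, the right-hand side equals $\max_{|\alpha|=1}|N[M(z^n+\alpha)](z)|$, while the left-hand side equals $\max_{|\gamma|=1}|N[P+\gamma P^*](z)|$. Supposing the inequality fails at some $z_0$ with $|z_0|\ge 1$ and picking unimodular $\alpha,\gamma$ attaining these maxima at $z_0$, one finds $\mu$ with $|\mu|<1$ for which the polynomial
\begin{align*}
F(z):=M(z^n+\alpha)-\mu\bigl(P(z)+\gamma P^*(z)\bigr)
\end{align*}
satisfies $N[F](z_0)=0$.

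The central step is to place all zeros of $F$ in $|z|\le 1$. On $|z|=1$, $|\mu(P+\gamma P^*)|\le 2|\mu|M$ while $|M(z^n+\alpha)|$ takes values in $[0,2M]$; a direct Rouch\'e comparison fails because the dominant polynomial $M(z^n+\alpha)$ vanishes at the $n$-th roots of $-\alpha$. The standard remedy is a perturbation $\alpha\to(1-\varepsilon)\alpha$, which moves the zeros of the leading term strictly inside $|z|<1$ and bounds $|M(z^n+\tilde\alpha)|$ from below by $M\varepsilon$ on $|z|=1$, followed by a limit $\varepsilon\to 0^+$. Once every zero of $F$ lies in $|z|\le 1$, Theorem \ref{th1} applied with $f=F$, $g=\phi$, $r=s=1$, $\sigma=n/2$ (legitimate because $\phi$ has all zeros in $|z|\le|z-n/2|$ by hypothesis on $N$) forces every zero of $N[F]$ into $|z|\le 1$, contradicting $N[F](z_0)=0$ when $|z_0|>1$; the borderline case $|z_0|=1$ follows by continuity.

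The principal obstacle is the Rouch\'e step, since the perturbation must simultaneously control $|\mu|$ and $\varepsilon$ so that the Rouch\'e hypothesis holds uniformly, and because $P+\gamma P^*$ may itself vanish on $|z|=1$. Once the auxiliary inequality is in hand, Theorem \ref{th4} is immediate for self-inversive $P$, and sharpness at $P(z)=z^n+1$ follows from the direct computation $N[P](z)=\phi(n/2)z^n+\lambda_0$ and $M=2$: \eqref{e6} reduces to the triangle inequality $|\phi(n/2)z^n+\lambda_0|\le|\phi(n/2)||z|^n+|\lambda_0|$, with equality at any $z$ ($|z|\ge 1$) for which $\arg(\phi(n/2)z^n)=\arg(\lambda_0)$.
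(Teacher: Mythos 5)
Your reduction coincides with the paper's: the auxiliary inequality you state is precisely the paper's Lemma \ref{lm4}, and the passage from it to \eqref{e6} via $P\equiv P^*$ and linearity of $N$ is verbatim the paper's proof of Theorem \ref{th4}; the sharpness computation for $z^n+1$ is also fine. The gap lies in your proposed proof of the auxiliary inequality, and the "principal obstacle" you flag at the end is not a technicality but fatal. The claim that every zero of $F(z)=M(z^n+\alpha)-\mu\bigl(P(z)+\gamma P^*(z)\bigr)$ lies in $|z|\le 1$ for unimodular $\alpha,\gamma$ and $|\mu|<1$ is false: take $P\equiv M$, so that $P+\gamma P^*=M(1+\gamma z^n)$, and choose $\gamma=-1$, $\alpha=1$, $\mu=-1/2$; then $F(z)=M\bigl(\tfrac{1}{2}z^n+\tfrac{3}{2}\bigr)$, whose zeros satisfy $|z|=3^{1/n}>1$. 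Consequently no Rouch\'e comparison can establish it: the perturbation $\alpha\to(1-\varepsilon)\alpha$ bounds the dominant term below by $M\varepsilon$ on $|z|=1$, while $|\mu(P+\gamma P^*)|$ is only bounded above by $2|\mu|M$, so the hypothesis would require $|\mu|<\varepsilon/2$ with $\varepsilon\to 0^+$ even though $\mu$ is fixed by the assumed failure at $z_0$. The special choice of $\alpha,\gamma$ as maximizers at $z_0$ does not help, since the zero-location claim is a global statement about $F$ and your argument for it never uses that choice.

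The paper circumvents this entirely. It first proves Lemma \ref{lm3}: if $P$ has no zeros in $|z|<1$ then $|N[P](z)|\le|N[P^*](z)|$ for $|z|\ge 1$; here Rouch\'e and Theorem \ref{th1} are legitimately applied to $P-\gamma P^*$ with $|\gamma|>1$, the needed strict domination coming from the maximum modulus principle applied to $P^*/P$. It then applies Lemma \ref{lm3} to $P(z)-\gamma M$ with $|\gamma|>1$, which is zero-free in $|z|<1$ since $|P|\le M$ there; this yields $|N[P](z)-M\gamma\lambda_0|\le|N[P^*](z)-M\overline{\gamma}\,N[\psi_n](z)|$ for $|z|\ge 1$. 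Choosing the argument of $\gamma$ so that the right-hand side equals $M|\gamma|\,|N[\psi_n](z)|-|N[P^*](z)|$ (possible because $|N[P^*](z)|\le M|N[\psi_n](z)|\le M|\gamma|\,|N[\psi_n](z)|$ by Corollary \ref{co1}) and letting $|\gamma|\to 1$ gives exactly your auxiliary inequality. To complete your outline you should replace the $F$-construction by this argument.
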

\begin{remark}\label{rm3}
\textnormal{
By taking $\lambda_i=0,$ $i=0,1,\ldots,m-1$ we get the following inequality which contains a result due to O'hara and Rodriguez \cite{SIP} as a special case.
\begin{align*}
\left|\dfrac{d^{m}}{dz^{m}}(P(z))\right| \leq \dfrac{1}{2}\left| \dfrac{d^{m}}{dz^{m}}(z^n)  \right|M, \quad \text{for} \quad |z|\geq 1.
\end{align*}}
\end{remark}
\section{\textbf{Lemmas}}
We require following lemmas for the proof of above theorems. The first lemma is due to A. Aziz \cite{WCA}.
\begin{lemma}\label{lm1}
Let $G(z_1, z_2, ....z_n)$ be a symmetric $n$-linear form of total degree $m$, $m\leq n$, in $z_1, z_2, ...., z_n$ and let C: $|z-c| \leq r$ be a circle containing the $n$ points $w_1, w_2, ...w_n$. Then in C there exists at least one point $w$ such that $$G(w, w, ....., w) = G(w_1, w_2, ...w_n).$$
\end{lemma}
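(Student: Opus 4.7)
The statement is essentially the classical Grace--Walsh--Szeg\H{o} coincidence theorem, and my strategy is reduction to Grace's apolarity theorem. Since $G$ is symmetric and multilinear in $z_1,\ldots,z_n$ with total degree $m\leq n$, it admits a unique expansion
\begin{equation*}
G(z_1,\ldots,z_n)=\sum_{k=0}^{m}c_k\,e_k(z_1,\ldots,z_n),
\end{equation*}
where $e_k$ is the $k$-th elementary symmetric polynomial. Specialising to the diagonal gives the univariate polynomial
\begin{equation*}
P(z):=G(z,z,\ldots,z)=\sum_{k=0}^{m}c_k\binom{n}{k}z^k,
\end{equation*}
of degree at most $m$, and the target becomes: find $w\in C$ with $P(w)=A$, where $A:=G(w_1,\ldots,w_n)$.

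\textbf{Contradiction setup and apolarity.} I would argue by contradiction: suppose $h(z):=P(z)-A$ has no zero in $C$. Introduce $f(z)=\prod_{i=1}^{n}(z-w_i)$, whose $n$ zeros all lie in $C$ by assumption. Expanding in the binomial basis $f(z)=\sum_{k=0}^{n}\binom{n}{k}a_kz^k$ and $h(z)=\sum_{k=0}^{n}\binom{n}{k}b_kz^k$ (with $b_k=0$ for $k>m$), Vieta's formulas give $\binom{n}{k}a_k=(-1)^{n-k}e_{n-k}(w_1,\ldots,w_n)$, and direct substitution in the apolarity expression yields
\begin{equation*}
\sum_{k=0}^{n}(-1)^k\binom{n}{k}a_kb_{n-k}=(-1)^n\left(\sum_{j=0}^{m}c_je_j(w_1,\ldots,w_n)-A\right)=0.
\end{equation*}
Thus $f$ and $h$, regarded as polynomials of formal degree $n$, are apolar. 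Grace's apolarity theorem then asserts that every closed circular region containing all zeros of one of two apolar polynomials contains at least one zero of the other; applied to the disk $C$, it forces $h$ to vanish somewhere in $C$, contradicting the assumption and yielding the required point $w$.

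\textbf{Main obstacle.} The delicate step is the degree gap: $h$ has actual degree only $m\leq n$, whereas Grace's theorem is customarily stated for polynomials of degree exactly $n$. I would resolve this by a perturbation argument, replacing $h$ by $h_\varepsilon(z):=h(z)+\varepsilon z^n$ for small $\varepsilon>0$, applying Grace's theorem to the apolar pair consisting of $f$ and $h_\varepsilon$ (after a matching perturbation so apolarity is preserved), and then letting $\varepsilon\to 0^{+}$. The $n-m$ additional zeros introduced by the perturbation escape to infinity and therefore eventually leave the bounded disk $C$, so the zero guaranteed by Grace in $C$ converges to a genuine finite zero of $h$ in $C$. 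Equivalently, one can work projectively and note that the $n-m$ ``zeros at infinity'' of $h$ cannot lie in the bounded region $C$. Once this technical point is in place, the remainder is bookkeeping with elementary symmetric functions.
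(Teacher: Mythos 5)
The paper offers no proof of this lemma at all: it is quoted verbatim from Aziz \cite{WCA}, so there is no in-paper argument to compare yours against. On its own merits, your proof is the standard Grace--Walsh--Szeg\H{o} route and it is essentially correct. The expansion $G=\sum_{k=0}^{m}c_k e_k$ is forced by symmetry plus multi-affineness, the diagonal specialisation $e_k(z,\dots,z)=\binom{n}{k}z^k$ is right, and your apolarity computation checks out: with $\binom{n}{k}a_k=(-1)^{n-k}e_{n-k}(w_1,\dots,w_n)$ and $b_0=c_0-A$, $b_j=c_j$ for $1\le j\le m$, the apolarity sum collapses to $(-1)^n\bigl(G(w_1,\dots,w_n)-A\bigr)=0$. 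The one point needing care is exactly the one you flag, the degree gap, and both of your proposed fixes work. The cleaner one is the projective remark: $C$ is a closed bounded disk, so the $n-m$ zeros of $h$ at infinity cannot account for the zero that Grace's theorem places in $C$, and a finite zero of $h$ must lie there. If you prefer the perturbation, you should write it out: since $a_n=1$ and apolarity is linear in the coefficients of the second polynomial, the compensation lives entirely in the constant term, and one checks that $q(z)=z^n-w_1w_2\cdots w_n$ is itself apolar to $f$, so $h_\varepsilon:=h+\varepsilon q$ is apolar to $f$ for every $\varepsilon$, has degree exactly $n$, and converges to $h$ uniformly on the compact set $C$; extracting a convergent subsequence of the zeros $\zeta_\varepsilon\in C$ guaranteed by Grace gives $h(\zeta)=0$ with $\zeta\in C$. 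Two small additions would make the write-up airtight: dispose of the degenerate case $P(z)-A\equiv 0$ (where every point of $C$ works and the contradiction hypothesis is vacuous), and replace the phrase ``after a matching perturbation'' by the explicit $h_\varepsilon$ above.
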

The next two lemmas are required for the proofs of the Theorems \ref{th3} and \ref{th4}.
\begin{lemma}\label{lm3}
 If $P\in\mathcal{P}_n$ and $P(z)$ does not vanish in $|z|<1$, then
\begin{align*}
|N[P](z)| \leq |N[P^*](z)| \qquad for \quad |z| \geq 1,
\end{align*}
where $P^*(z)=z^n \overline{P\big({1}/{\overline{z}}\big)}$.
\end{lemma}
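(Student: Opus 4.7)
The plan is to exploit Theorem \ref{th1} in the special case $\sigma = n/2$, $s = 1$: under the standing assumption on $\lambda_0,\ldots,\lambda_m$, that theorem applied with $r = 1$ gives the following ``zero-region preservation'' statement, which is all I need: whenever $Q$ is a polynomial of degree $n$ with all its zeros in $|z| \leq 1$, the polynomial $N[Q]$ also has all its zeros in $|z| \leq 1$. Writing $P(z) = a_n \prod_{j=1}^n (z - z_j)$ with $|z_j| \geq 1$, one has $P^*(z) = \overline{a_n} \prod_{j=1}^n (1 - \overline{z_j} z)$ whose zeros $1/\overline{z_j}$ all lie in $|z| \leq 1$, and the classical identity $|1 - \overline{z_j} z|^2 - |z - z_j|^2 = (|z|^2 - 1)(|z_j|^2 - 1)$ yields the preliminary inequality $|P(z)| \leq |P^*(z)|$ throughout $|z| \geq 1$.

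For each complex $\alpha$ with $|\alpha| > 1$ I introduce the auxiliary polynomial $F_\alpha(z) := P(z) - \alpha P^*(z)$. If $F_\alpha(z_0) = 0$ for some $|z_0| > 1$, then $|P(z_0)| = |\alpha|\,|P^*(z_0)| > |P^*(z_0)|$, contradicting the preliminary estimate; hence all zeros of $F_\alpha$ lie in $|z| \leq 1$. The leading coefficient of $F_\alpha$ equals $a_n - \alpha \overline{a_0}$ and $|a_n/a_0| = \prod |z_j|^{-1} \leq 1 < |\alpha|$, so $F_\alpha$ is genuinely of degree $n$. Applying the specialised Theorem \ref{th1} both to $F_\alpha$ and to $P^*$, and invoking the linearity of $N$, gives $N[F_\alpha](z) = N[P](z) - \alpha N[P^*](z) \neq 0$ and $N[P^*](z) \neq 0$ throughout $|z| > 1$. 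Consequently, for every $|z| > 1$, the quotient $N[P](z)/N[P^*](z)$ is not equal to any $\alpha$ with $|\alpha| > 1$, so $|N[P](z)| \leq |N[P^*](z)|$ on $|z| > 1$; continuity extends this to $|z| = 1$.

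The principal technical point, and the place that needs the most care, is the degree-$n$ verification for $F_\alpha$, since Theorem \ref{th1} is stated for a polynomial of prescribed degree $n$; this is precisely where the hypothesis $P(z) \neq 0$ in $|z| < 1$ enters, via the bound $|a_n| \leq |a_0|$ derived from the factorisation of $P$. Everything else is a standard variational device: one uses the free parameter $\alpha$ to reformulate the target inequality as a non-vanishing statement for a one-parameter family of polynomials, and then reads the non-vanishing off from Theorem \ref{th1}.
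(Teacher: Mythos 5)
Your proof is correct and follows essentially the same route as the paper: both establish $|P(z)|\le|P^*(z)|$ for $|z|\ge 1$, deduce that $P(z)-\alpha P^*(z)$ has all its zeros in $|z|\le 1$ for every $|\alpha|>1$, and then apply Theorem \ref{th1} with $s=1$, $\sigma=n/2$ and the linearity of $N$. The only differences are cosmetic: you obtain the preliminary inequality from the factorization of $P$ rather than the maximum modulus principle, and you add the worthwhile verification (omitted in the paper) that $P(z)-\alpha P^*(z)$ has exact degree $n$.
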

\begin{proof}
By hypothesis $P^*(z)=z^n \overline{P\big({1}/{\overline{z}}\big)}$, therefore $|P(z)|=|P^*(z)|$ for $|z|=1$. Also since $P(z)$ does not vanish in $|z|<1$, hence $\frac{P^*(z)}{P(z)}$ is analytic for $|z| \leq 1$ with $\big|\frac{P^*(z)}{P(z)}\big|=1$ on $|z|=1$. Therefore by the maximum modulus principle, it follows that $|P(z)| \leq |P^*(z)|$ for $|z| \geq 1$. By using Rouche's theorem,  the polynomial $P(z)- \gamma P^*(z)$ has all its zeros in $|z| \leq 1$ for every $\gamma\in\mathbb{C}$ such with $|\gamma|>1.$ Applying Theorem \ref{th1} to $P(z)- \gamma P^*(z)$ with $s=1, \sigma = \frac{n}{2}$ and noting that $N$ is linear operator, we conclude that the polynomial $N[P(z)] - \gamma N[P^*(z)]$ has all zeros in $|z| \leq 1$. This implies that $$|N[P](z)| \leq |N[P^*](z)| \qquad \text{for} \quad |z| \geq 1.$$
This completes the proof of lemma \ref{lm3}.
\end{proof}
\begin{lemma}\label{lm4}
If $P\in\mathcal{P}_n$, then for $|z| \geq 1$,
\begin{align*}
|N[P](z)| + |N[P^*](z)| \leq \big(|N[\psi_n](z)| + |\lambda_0|\big) \max_{|z|=1} |P(z)|,
\end{align*}
where $\psi_n (z) = z^n$ and $P^*(z) = z^n \overline{P\big({1}/{\overline{z}}\big)}$.
\end{lemma}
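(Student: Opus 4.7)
The plan is to apply Theorem \ref{th2} to a one-parameter family of polynomials obtained by coupling $Mz^n$ with $P^*$, and then to optimize the parameter to extract the sum inequality. With $M=\max_{|z|=1}|P(z)|$, for each complex $\gamma$ with $|\gamma|>1$ set
\[
G_\gamma(z) := \gamma M z^n + P^*(z).
\]
Since $|\gamma M z^n|=|\gamma|M>M\geq|P^*(z)|$ on $|z|=1$, Rouch\'e's theorem forces all $n$ zeros of $G_\gamma$ into $|z|<1$. A direct computation using $(P^*)^*=P$ produces the reciprocal polynomial $G_\gamma^*(z)=\bar\gamma M + P(z)$, and since $|G_\gamma^*(z)|=|G_\gamma(z)|$ on $|z|=1$, Theorem \ref{th2} applied to the pair $(G_\gamma,G_\gamma^*)$ yields $|N[G_\gamma^*](z)|\leq|N[G_\gamma](z)|$ for $|z|\geq 1$. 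By linearity of $N$ this reads
\[
|\bar\gamma M\lambda_0+N[P](z)|\;\leq\;|\gamma M N[\psi_n](z)+N[P^*](z)|,
\]
and continuity extends it to $|\gamma|=1$.

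Next, fix $z$ with $|z|\geq 1$. Since $N[\psi_n](z)=z^n\phi(n/2)\neq 0$ on $|z|\geq 1$, we may choose $\gamma$ on the unit circle so that $\gamma M N[\psi_n](z)$ and $N[P^*](z)$ point in opposite directions. Corollary \ref{co1} applied to $P^*$ (which shares the sup-norm $M$ with $P$ on $|z|=1$) gives $|N[P^*](z)|\leq M|N[\psi_n](z)|$, so for this choice of $\gamma$ the right hand side above collapses to $M|N[\psi_n](z)|-|N[P^*](z)|$. The left hand side is bounded below, by the reverse triangle inequality, by $|N[P](z)|-M|\lambda_0|$. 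Combining the two estimates yields
\[
|N[P](z)|-M|\lambda_0|\;\leq\;M|N[\psi_n](z)|-|N[P^*](z)|,
\]
which on rearrangement is the claimed bound.

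The substance of the argument lies in identifying the correct pairing. Introducing $Mz^n$ as the companion of $P^*$ rather than of $P$ is what sends $\lambda_0$ to the same side as $|N[\psi_n](z)|$ after taking the reciprocal polynomial, so that optimizing the phase of $\gamma$ produces the sum $|N[\psi_n](z)|+|\lambda_0|$ instead of a difference; the alternative pairing only delivers the weaker bound $||N[P](z)|-|N[P^*](z)||\leq M(|N[\psi_n](z)|-|\lambda_0|)$. The remaining checks---that $G_\gamma$ has degree exactly $n$ with all its zeros in $|z|\leq 1$, the identity $(P^*)^*=P$, and the applicability of Corollary \ref{co1} to $P^*$---are routine but should be explicit in the write-up.
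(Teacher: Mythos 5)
Your argument is correct and is essentially the paper's own proof in reciprocal form: the paper applies Lemma \ref{lm3} to $P(z)-\gamma M$ (whose reciprocal is $P^*(z)-\bar\gamma M z^n$), obtains the same key inequality $|N[P](z)-\gamma M\lambda_0|\leq |N[P^*](z)-\bar\gamma M N[\psi_n](z)|$, and then optimizes the argument of $\gamma$ using Corollary \ref{co1} exactly as you do. Your substitution of Theorem \ref{th2} applied to the pair $(G_\gamma, G_\gamma^*)$ for Lemma \ref{lm3}, and working at $|\gamma|=1$ by continuity rather than letting $|\gamma|\to 1$, are only cosmetic differences.
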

\begin{proof}
Let $M= \max_{|z|=1} |P(z)|$, then $|P(z)| \leq M$  for $|z|\leq 1.$
By Rouche's theorem it follows that the polynomial $P(z) - \gamma M$ does not vanish in $|z| < 1$ for every $\gamma \in \mathbb{C}$ with $|\gamma|>1$. Applying lemma \ref{lm3} to the polynomial $P(z) - \gamma M$, we get
\begin{align}\label{lp1}
|N[P](z)-M \gamma \lambda_0| \leq |N[P^*](z)-M \overline{\gamma}N[\psi_n](z)|, \quad \text{for} ~ |z| \geq 1,
\end{align}
where $\psi_n (z) = z^n.$ Now choosing argument of $\gamma$ such that
\begin{align*}
|N[P^*](z)-M \overline{\gamma}N[\psi_n](z)| = M|N[\psi_n](z)||\gamma|-|N[P^*](z)|,
\end{align*}
which is possible by \eqref{x7}, therefore \eqref{lp1} implies that
\begin{align*}
|N[P](z)|-M|\gamma||\lambda_0| \leq M|N[\psi_n](z)||\gamma|-|N[P^*](z)|, \quad for ~ |z| \geq 1.
\end{align*}
Letting $|\gamma| \rightarrow 1$ in above inequality, we obtain for $|z|\geq 1$,
\begin{align*}
|N[P](z)| + |N[P^*](z)| \leq \big(|N[\psi_n](z)| + |\lambda_0|\big) \max_{|z|=1} |P(z)|.
\end{align*} 
That proves lemma \ref{lm4}.
\end{proof}
\section{\textbf{Proof of the Theorems}}
\begin{proof}[Proof of Theorem \ref{th1}]
Let $w$ be an arbitrary zero of $h(z)$, then
\begin{align}\label{thp1}
\sum\limits_{k=0}^{m}\lambda_k F^{(k)}(w) \frac{\sigma^k w^k}{k!} = h(w) = 0.
\end{align}
This equation is linear and symmetric in the zeros of $F(z)$. By lemma \ref{lm1} $w$ will also satisfy the equation obtained by replacing $F(z)$ in \eqref{thp1} by $(z-\alpha)^n$ where $\alpha$ is suitably chosen point in $|z|\leq r$. That is, $w$ satisfies the equation   
\begin{align*}
\sum\limits_{k=0}^{m} \frac{\sigma^k}{k!} \lambda_k n(n-1)....(n-k+1)(w-\alpha)^{n-k} w^k = 0, 
\end{align*}
or equivalently,
\begin{align*}
(w-\alpha)^n \sum\limits_{k=0}^{m} \lambda_k \binom{n}{k} \bigg( \frac{\sigma w}{w-\alpha}\bigg)^k =0,
\end{align*}
that is,
\begin{align*}
(w-\alpha)^n g \bigg( \frac{\sigma w}{w-\alpha}\bigg) =0.
\end{align*}
Hence, we have
\begin{align*}
w-\alpha=0 \quad \text{or} \quad g \bigg( \frac{\sigma w}{w-\alpha}\bigg) =0,
\end{align*}
which implies,
\begin{align*}
w=\alpha \quad \text{or} \quad \beta =\frac{\sigma w}{w-\alpha}, ~\text{for some zero $\beta$ of $g(z).$} 
\end{align*}
 This gives,
 \begin{align*}
 w=\alpha \quad\text{or} \quad w= \frac{\alpha \beta}{\beta-\sigma}.
 \end{align*}
 Now, 
\begin{align*}
|w|=|\alpha| \leq r \quad \text{or} \quad |w|=\frac{|\alpha||\beta|}{|\beta - \sigma|} ~ \leq |\alpha|s ~ \leq rs. 
\end{align*}
Hence, it follows that
\begin{align*}
|w| \leq r ~ \max (1,s).
\end{align*}
That is, all the zeros of $h(z)$ lie in $|z| \leq r ~ \max(1,s)$. This completes the proof of Theorem \ref{th1}.

\end{proof}

\begin{proof}[Proof of Theorem \ref{th2}]
By hypothesis $f(z)$ is a polynomial of degree $n$ having all zeros in $|z|\leq 1$ and $P\in\mathcal{P}_n$ such that
\begin{align}\label{th2p1}
|P(z)|\leq |f(z)| \qquad \text{for} \quad |z|=1.
\end{align}
If $z_\nu $ is a zero of $f(z)$ of multiplicity $s_\nu$  on the unit circle $|z|=1,$ then it is evident from \eqref{th2p1} that $z_\nu$ is also a zero of  $P(z)$  of multiplicity at least $s_\nu.$ Let $g(z) = \prod\limits_{z_\nu\in\Omega}(z-z_\nu)^{s_\nu}$ where $\Omega=\{z_\nu\in\mathbb{C}:f(z_\nu)=0\wedge |z_\nu|=1 \} .$ Then again from \eqref{th2p1}, we have $$\bigg|\frac{P(z)}{g(z)}\bigg| \leq \bigg|\frac{f(z)}{g(z)}\bigg| \quad \text{for} \quad |z|=1.$$
By Rouhe's theorem for every $\gamma \in \mathbb{C}$ with $|\gamma|>1$, the polynomial $h(z)=\frac{P(z)-\gamma f(z)}{g(z)}$ has all its $n-\sum s_v$ zeros in $|z|<1$. Since the polynomial $g(z)$ has $\sum s_v$ zeros on $|z|=1$, the polynomial $h(z)g(z)=P(z)- \gamma f(z)$ has all the $n$ zeros in $|z|\leq 1$. Invoking Theorem \ref{th1} with $s=1$, $\sigma = \frac{n}{2}$ and noting that $N$ is a linear operator, it follows that all the zeros of the polynomial
\begin{align}\label{th2p2}
T(z)=N[P](z)-\gamma N[f](z)
\end{align}
lie in $|z|\leq 1$. This implies,
\begin{align}\label{th2p3}
|N[P](z)| \leq |N[f](z)| \qquad\text{ for} \quad |z|>1.
\end{align}
For if \eqref{th2p3} is not true then there exists $z_0$ with $|z_0|>1$, such that $|N[P](z)|_{z=z_0} > |N[f](z)|_{z=z_0}$, then taking $\gamma=\frac{N[P](z_0)}{N[f](z_0)}$, which is a well defined complex number with $|\gamma|>1$ and with this choice of $\gamma,$  from \eqref{th2p2}, we get  $T(z_0)=0$, $|z_0|>1.$ This clearly is a contradiction to the fact that all the zeros of $T(z)$ lie in $|z|\leq 1$. This establishes \eqref{th2p3} for $|z|>1$. For $|z|=1$, the result follows by continuity. This proves theorem \ref{th2} completely.
\end{proof}
\begin{proof}[Proof of Theorem \ref{th3}]
Since $P(z)\neq 0$ in $|z|<1$, therefore, if $P^*(z) = z^n \overline{P\big({1}/{\overline{z}}\big)}$, then by lemma \ref{lm3}, we have
$$|N[P](z)|\leq |N[P^*](z)| \quad \text{for} ~ |z|\geq 1.$$ The above inequality in conjunction lemma \ref{lm4} gives for $|z|\geq 1,$
\begin{align*}
2|N[P](z)| \leq |N[P](z)| + |N[P^*](z)| \leq \big(|N[\psi_n](z)| + |\lambda_0|\big) \max_{|z|=1} |P(z)|.
\end{align*}
This is equivalent to inequality \eqref{a4} and completes the proof of Theorem \ref{th3}.
\end{proof}

\begin{proof}[Proof of Theorem \ref{th4}]
Since $P\in\mathcal{P}_n$ is self-inversive polynomial then $P(z) = P^*(z)$ where $P^*(z)=z^n \overline{P\big({1}/{\overline{z}}\big)}$, therefore, we have
\begin{align*}
N[P](z) = N[P^*](z), \qquad\forall z\in \mathbb{ C}.
\end{align*}
Using this in lemma \ref{lm4}, we get
\begin{align*}
2|N[P](z)|\leq \bigg(|N[\psi_n](z)|+|\lambda_0|\bigg) \max_{|z|=1}|P(z)|, \qquad |z|\geq 1,
\end{align*}
where $\psi_n (z)=z^n$ and the proof of Theorem \ref{th4} is complete.
\end{proof}
\textbf{Acknowledgment:} The authors would like to thank the anonymous referee for comments and suggestions.
.
\end{document}